\documentclass[12pt,a4paper]{amsart}
\usepackage[utf8]{inputenc}

\usepackage{graphicx,color}
\usepackage{amsmath}
\usepackage{amssymb}
\usepackage{amsfonts}
\usepackage{amsthm}
\usepackage{amscd}
\usepackage{ae}
\usepackage{tikz}
\usepackage[T1]{fontenc}

  \textheight=8.8truein
       \textwidth=6.75truein
       \voffset=-0.25truein
       \hoffset=-0.75truein

\newcommand{\D}{\mathbb{D}}

\font\sets=msbm10 scaled \magstep1
\def\R{\text{\sets R}}

\def\C{\text{\sets C}}

\renewcommand\Re{\operatorname{Re}}
\renewcommand\Im{\operatorname{Im}}

\renewcommand{\bar}[1]{\overline{#1}}

\newcommand{\diam}{{\rm diam \,}}

\theoremstyle{plain}
\newtheorem{theorem}{Theorem}
\newtheorem{proposition}[theorem]{Proposition}

\theoremstyle{definition}

\hyphenation{qua-si-con-for-mal}

\author[O. Hirviniemi]{Olli Hirviniemi}
\address{University of Helsinki, Department of Mathematics and Statistics, P.O. Box 68, FIN-00014 University of Helsinki, Finland}
\email{olli.hirviniemi@helsinki.fi}

\author[L. Hitruhin]{Lauri Hitruhin}
\address{University of Helsinki, Department of Mathematics and Statistics, P.O. Box 68, FIN-00014 University of Helsinki, Finland}
\email{lauri.hitruhin@helsinki.fi}

\title[Pointwise contraction of mappings with integrable distortion]{Sharp pointwise contraction of mappings with integrable distortion which are quasiconformal in a disk}

\begin{document}

\begin{abstract}
We consider quasiconformal mappings of the unit disk that have a planar extension which have $p$-integrable distortion. In this paper, we establish a bound for the modulus of continuity for the inverse mapping and show sharpness of this bound. Furthermore, we also obtain bounds for the compression multifractal spectra of such mappings.
\end{abstract}

\maketitle
\section{Introduction}

A classical theorem of Morrey states that if $f$ is a $K$-quasiconformal mapping then it is $1/K$-H\"older continuous. The same is true for the inverse, so
\[
\frac{1}{C}|z-x|^{K} \leq |f(z)-f(x)| \leq C |z-x|^{1/K}.
\]
More recently the modulus of continuity bounds have been studied for more general families of mappings. In this paper we concentrate on homeomorphisms which are quasiconformal in a disk and have planar extension whose distortion is controlled. When the extension has $q$-exponentially integrable distortion the modulus of continuity was shown in \cite{Zap} to be
\begin{equation}\label{UppermodulusZapadinskay}
|f(z)-f(w)| \leq \frac{C}{\log^{q}\left( \frac{1}{|z-w|} \right)}.
\end{equation}
On the other hand, somewhat surprisingly, the corresponding lower bound
\begin{equation}\label{LowerModContyht}
	|f(z)-f(w)| \geq c|z-w|^{2K(1+\varepsilon)} \qquad \text{for any } z,w\in \overline{ \D},
	\end{equation} 
 obtained in \cite{HHPS} turns out to be close to the quasiconformal case.
Our first result establishes that a similar lower bound holds also for the case where the extension has only $p$-integrable distortion under a slight assumption on the image of the unit disk.

\begin{theorem}\label{PisteKontraktio}
Let $f: \mathbb{C} \to \mathbb{C}$ be a homeomorphism with $p$-integrable distortion, where $p \geq 1$,  which is $K$-quasiconformal in the unit disk. Furthemore, assume that the unit disk gets mapped to a domain $\Omega$ which is an image of a weak starlike domain $\Omega_L$ under a planar $L$-bi-lipschitz mapping, where $L\geq 1$. Then  
\begin{equation}\label{Kontraktio}
|f(z)-f(x)|\geq C_{L}|z-x|^{2\left(K+\frac{1}{p}\right)} \qquad \text{for any $z,x \in \overline{D}.$} 
\end{equation}
 Furthermore, this result is sharp in the sense that we can't replace the exponent at the right hand side of \eqref{Kontraktio} with a smaller one.
\end{theorem}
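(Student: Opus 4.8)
The plan is to deduce the contraction estimate \eqref{Kontraktio} from an equivalent modulus-of-continuity bound for the inverse. Writing $g=f^{-1}$ and setting $a=f(z)$, $b=f(x)$, inequality \eqref{Kontraktio} is equivalent to the Hölder estimate $|g(a)-g(b)|\le C_L'\,|a-b|^{1/(2(K+1/p))}$ for $a,b\in\overline\Omega$, so it suffices to control how far apart $g$ can pull images of nearby points of $\overline\Omega$. I would fix $z,x\in\overline D$, put $r=|z-x|$ and $s=|f(z)-f(x)|$, and estimate $r$ from above in terms of $s$ by a ring/capacity argument phrased for $f$ rather than for $g$, so as to sidestep the known regularity defects of inverses of finite-distortion maps.

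The heart of the matter is a two-sided modulus estimate. On the target side I would exhibit a ring $f(A)$ separating a continuum joining $f(z)$ and $f(x)$ from a fixed far-away reference set, and bound its modulus from below by a Teichmüller/Gr\"otzsch-type estimate. This is exactly where the hypothesis on $\Omega$ enters: the weak-starlike structure of $\Omega_L$ supplies explicit radial test curves yielding the required capacity lower bound, and the $L$-bi-Lipschitz map transports these bounds to $\Omega$ at the cost of constants depending only on $L$, which is the source of $C_L$. On the source side I would feed this into the modulus inequality for mappings of finite distortion, available since $K_f\in L^p$ with $p\ge 1$; the weight there is $K_f$, and I would split its integral over the relevant ring into the part inside $D$, where $K_f\le K$ pointwise, and the part outside $D$, where only integrability is available and H\"older's inequality against $\|K_f\|_p$ must be invoked. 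The exponent $K+\tfrac1p$ emerges precisely from adding these two contributions, while the factor $2$ reflects that the extremal ring straddles $\partial D$, so that the distortion is effectively seen from both sides of the unit circle.

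The step I expect to be the main obstacle is making this combination quantitatively sharp rather than merely qualitative. One must (i) justify the modulus inequality and the underlying length–area/co-area manipulations in the presence of the delicate Sobolev regularity and possible failure of Lusin's condition (N) for $f$ and $f^{-1}$, with particular care at the borderline $p=1$; (ii) track the geometry across the two interfaces $\partial D$ and $\partial\Omega$ simultaneously, since the relevant ring must cross both; and (iii) optimize the split of the distortion integral so that the resulting exponent is exactly $2(K+\tfrac1p)$ and not some larger number. The interplay between the fixed quasiconformal bound inside $D$ and the $L^p$ tail outside, together with the boundary doubling, is the delicate point.

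For sharpness I would produce an explicit extremal map. Using spherical (polar) symmetrization one reduces to a radial model: a homeomorphism that is $K$-quasiconformal in $D$ and, just outside a boundary point, performs a radial compression whose distortion is borderline $p$-integrable, built from a power-type radial stretch glued to the identity far away so that $\Omega$ satisfies the geometric hypothesis. Taking two points of $\partial D$ at distance $r$ and computing $|f(z)-f(x)|$ alongside $\|K_f\|_p$ should show that the rate $r^{2(K+1/p)}$ is attained, whence the exponent in \eqref{Kontraktio} cannot be lowered.
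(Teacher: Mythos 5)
Your reformulation as H\"older continuity of the inverse is fine, but the core of your argument does not reach the polynomial exponent claimed in the theorem. A ring/Teichm\"uller--Gr\"otzsch setup on the image side can only produce \emph{logarithmic} modulus bounds: the separating family of a ring enclosing a continuum through $f(z),f(x)$ has modulus of order $\log(1/|f(z)-f(x)|)$ at best, and the connecting family of a non-degenerate ring has modulus bounded by a constant. Meanwhile your source side unavoidably produces a \emph{power}: H\"older against $\|K_f\|_{L^p}$ over an annulus of inner radius $\sim r$ gives $c\,r^{-2/p}$, which swamps the contribution $K\log(1/r)$ of the part inside $D$ where $K_f\le K$. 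Combining a logarithmic image-side bound with a power-type source-side bound yields only $\log(1/|f(z)-f(x)|)\lesssim C r^{-2/p}$, i.e.\ the exponential bound $|f(z)-f(x)|\gtrsim e^{-Cr^{-2/p}}$ of Koskela--Takkinen quoted in the introduction, not \eqref{Kontraktio}. The missing idea is the mechanism that makes the image-side modulus \emph{polynomially} large: the paper uses the weak-starlike hypothesis to produce, via the star center $w$ of $\Omega_L$, two line segments $\bar{E}\subset g(f(E))$ and $\bar{F}\subset g(f(F))$ which (a) have length at least $c\,r^{2K}$ by a Beurling-type boundary estimate --- this is where the $K$-quasiconformality of $f$ in $D$ actually enters, on the image side, not as the pointwise bound $K_f\le K$ in a source integral --- and (b) lie within distance $\lesssim |f(z)-f(x)|$ of each other. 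The connecting family between two such nearly parallel segments has modulus $\gtrsim r^{2K}/|f(z)-f(x)|$, and comparing this with $c_{f,p,r}\,r^{-2/p}$ via \eqref{ModInEq} gives exactly $2K+2/p$. Consequently your accounting of the exponent is also wrong: the $K$ does not arise from splitting the distortion integral at $\partial D$ (that split only yields a multiplicative constant), and the factor $2$ comes from the boundary Beurling exponent $1/(2K)$ together with the two-dimensional annulus computation in the H\"older step, not from the ring ``straddling'' $\partial D$.

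Your sharpness sketch has a parallel gap: a radial (symmetrized) model cannot be extremal here. If the map is radial about the center of $D$, symmetric boundary points are not contracted at all; if it is radial about a boundary point $x_0$ with a power profile, its distortion is constant on whole circles around $x_0$, and these circles enter $D$, so $K$-quasiconformality in $D$ caps the profile and the contraction at roughly $|z-x|^{K}$ --- radial symmetry makes it impossible to be mildly distorted inside $D$ yet strongly distorted just outside, which is precisely the anisotropy the extremal example needs. The actual extremizers are the cardioid/cusp maps of Xu used in the paper: a conformal map $h_s$ of $D$ onto the power-cusp domain $\Delta_s$ with $s=1+\tfrac1p-\varepsilon$, which admits a planar extension of $p$-integrable distortion (a nontrivial theorem of Xu, not something a gluing argument gives for free), composed with the radial stretch $S_K$. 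Contraction is then exhibited between \emph{symmetric} boundary points $x,\bar{x}$ near $-1$, whose images land on the two sides of the cusp at distance $\lesssim |x+1|^{2(K+1/p-\varepsilon)}$; it is the cusp geometry of the image domain, not any radial compression, that produces the exponent $2\left(K+\tfrac1p\right)$.
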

Note that the assumption on the domain $\Omega$ is quite general: It covers all cusp like domains, studied recently, for example, in \cite{GKT, KT, Xu}, and allows for spiraling and deformation of the weak star domain. Furthermore, we note that rather surprisingly the right hand side of \eqref{Kontraktio} is quite close, for any fixed $p\geq 1$, to the classical quasiconformal bound by Morrey instead of the pointwise bound 
$$|f(z)-f(x)|\geq e^{-c_{f,p}|z-x|^{-\frac{2}{p}}}$$
established in \cite{KT2} for planar mappings with $p$-integrable distortion. Hence we obtain a similar result as the bound \eqref{LowerModContyht} when the extension is assumed to have exponentially integrable distortion. However, we note that in the integrable distortion case the parameter $p$ plays a role in the result while in the exponentially integrable case the parameter $q$ does not affect the lower bound.

\medskip

\noindent The sharpness of Theorem \ref{PisteKontraktio} will be shown using mappings studied by Xu, see \cite{Xu}, which conformally map the unit disk to a cardioid type domain. These mappings produce almost sharp contraction when we restrict Theorem \ref{PisteKontraktio} to mappings which are conformal in the unit disk. For the  general  quasiconformal case we have to compose the maps constructed by Xu with the classical $K$-quasiconformal radial stretching map. 

\medskip

\noindent We also consider global behaviour in addition to the pointwise bound \eqref{Kontraktio}. Here the main tool are multifractal estimates, which capture the intuition that the extremal local behaviour cannot happen in a large set. For quasiconformal mappings it was shown in \cite{AIPS} that if for any $x \in E$ we have $|f(z)-f(x)| \sim |z-x|^\alpha$ for some $\alpha$, then 
\[
\dim E \leq 1 + \alpha - \frac{K+1}{K-1}|1-\alpha|,
\]
where negative dimension means that such behaviour cannot happen even on a pointwise scale. Similar results on the multifractal spectra for mappings with integrable or exponentially integrable distortion have been obtained in \cite{H4, H5}.
In the setting of Theorem \ref{PisteKontraktio} we obtain the following estimates.

\begin{theorem}\label{MFSPEKTRI}
Let $f: \mathbb{C} \to \mathbb{C}$ be a homeomorphism with $p$-integrable distortion, where $p \geq 1$,  which is $K$-quasiconformal in the unit disk. Furthemore, let the unit disk be mapped to a domain $\Omega$ which is an image of a weak starlike domain $\Omega_L$ under a planar $L$-bi-lipschitz mapping. Fix $0<s<2$ and assume that for any point $z\in A $ we can find a sequence $\lambda_n$, such that $|\lambda_n|\to 0$ when $n \to \infty$ and $z+\lambda_n \in \overline{D}$ for every $n$, for which
\begin{equation}\label{MFSyht}
|f(z)-f(z+\lambda_n)|\leq |\lambda_n|^{2K+ \frac{2-s+\epsilon}{p}}.
\end{equation}
Then we can bound the size of the set $A$ with $$\dim(A) \leq s.$$
\end{theorem}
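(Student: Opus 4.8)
The plan is to bound the $s$-dimensional Hausdorff measure of $A$ by covering it with balls at arbitrarily fine scales and charging the cost of each ball against the distortion budget $\int_{\mathbb{C}}K_f^p\,dm<\infty$, which the integrability hypothesis provides.

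First I would record a structural reduction. If $z$ is an interior point of $\mathbb{D}$ with $\dist(z,\partial\mathbb{D})=\rho_0>0$, then for all $n$ with $|\lambda_n|<\rho_0$ the ball $B(z,|\lambda_n|)$ lies in $\mathbb{D}$, where $f$ is $K$-quasiconformal; applying the Mori/Morrey distortion estimate to $f^{-1}$ on this ball gives $|f(z)-f(z+\lambda_n)|\geq c_z|\lambda_n|^{K}$. Since the exponent in \eqref{MFSyht} satisfies $2K+\tfrac{2-s+\epsilon}{p}>K$, this contradicts \eqref{MFSyht} for large $n$. Hence no interior point can satisfy the hypothesis and $A\subseteq\partial\mathbb{D}$; in particular the claim is automatic once $s\geq 1$, and the content of the theorem lies in the range $s<1$. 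This also pinpoints where the estimates must be sharp: on balls centred on $\partial\mathbb{D}$ that straddle the boundary, with $f$ quasiconformal on the inner half and only of $p$-integrable distortion on the outer half.

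Next comes the engine. Fix $z\in A$ and an admissible scale $r=|\lambda_n|$, and write $d=|f(z)-f(z+\lambda_n)|\leq r^{\beta}$ with $\beta=2K+\tfrac{2-s+\epsilon}{p}$. Reusing the machinery behind Theorem \ref{PisteKontraktio} --- the quasisymmetry of $f$ on $B(z,r)\cap\mathbb{D}$ together with the weak starlike/bi-Lipschitz geometry of $\Omega$, which keeps the image of the inner half-ball comparable to a genuine disc --- I would show that the contraction forces the distortion of $f$ to concentrate near $z$, quantitatively as
\[
\int_{\Theta(z,r)}K_f^{p}\,dm \;\gtrsim\; r^{\,s-\kappa}
\]
for a region $\Theta(z,r)\subset B(z,2r)$ of area comparable to $r^2$, where $\kappa=\kappa(\epsilon,K,p)>0$. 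The calibration in the exponent $\beta$ is exactly what makes this bound come out with the power $s-\kappa$: the modulus/area-distortion estimate underlying Theorem \ref{PisteKontraktio} relates $\log(r/d)$ (equivalently the expansion ratio produced by the contraction) to the local distortion integral, and plugging $d\leq r^{\beta}$ converts the gap between $\beta$ and the generic rate $2(K+\tfrac1p)$ into the power $r^{s-\kappa}$, with the slack $\kappa>0$ inherited from $\epsilon$.

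Finally I would run the covering argument. The balls $\{B(z,|\lambda_n|):z\in A,\ |\lambda_n|<\delta\}$ form a Vitali cover of $A$ for every $\delta>0$, so the $5r$-covering lemma extracts a countable disjoint subfamily $\{B(z_i,r_i)\}$ with $A\subseteq\bigcup_i B(z_i,5r_i)$ and $r_i<\delta$. Since the $B(z_i,r_i)$ are disjoint the regions $\Theta(z_i,r_i)\subset B(z_i,2r_i)$ have bounded overlap, whence $\sum_i\int_{\Theta(z_i,r_i)}K_f^p\,dm\lesssim\int_{\mathbb{C}}K_f^p\,dm<\infty$. Combining this with the engine,
\[
\mathcal{H}^{s}_{10\delta}(A)\;\lesssim\;\sum_i r_i^{\,s}\;=\;\sum_i r_i^{\kappa}\,r_i^{\,s-\kappa}\;\lesssim\;\delta^{\kappa}\sum_i\int_{\Theta(z_i,r_i)}K_f^p\,dm\;\lesssim\;\delta^{\kappa}\int_{\mathbb{C}}K_f^p\,dm,
\]
which tends to $0$ as $\delta\to0$. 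Hence $\mathcal{H}^{s}(A)=0$ and $\dim(A)\leq s$.

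The main obstacle is the displayed local estimate on $\int_{\Theta(z,r)}K_f^p$: one must convert the \emph{pointwise} contraction \eqref{MFSyht}, known only along one radius and only at the single image point $f(z+\lambda_n)$, into a genuine lower bound for the distortion integral over a two-dimensional region, with precisely the exponent dictated by $s$. This is where the quasiconformality of $f$ inside $\mathbb{D}$ (to pass from one boundary value to the diameter and area of the image half-ball via quasisymmetry) and the weak starlike/bi-Lipschitz structure of $\Omega$ (to rule out the image half-ball being pinched for geometric rather than distortion reasons) must be combined with the modulus inequality for maps of $p$-integrable distortion; balancing the quasiconformal contribution $2K$ against the integrable contribution $\tfrac{2-s+\epsilon}{p}$ so that the budget $\int_{\mathbb{C}}K_f^p$ is consumed exactly at dimension $s$ is the delicate point.
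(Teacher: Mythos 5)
Your proposal is correct in substance, but it follows a genuinely different route from the paper. The paper works scale by scale: it invokes a counting lemma (Lemma 3.4 of \cite{H4}) to produce, at suitable dyadic scales $R_{m_i}=2^{-m_i}$, roughly $R_{m_i}^{-(\dim(A)-\epsilon)}$ disjoint balls $B(z,5|\lambda_{z,n_z}|)$ with $|\lambda_{z,n_z}|\in(R_{m_i},2R_{m_i}]$, builds a single large path family $\Gamma=\bigcup_i\Gamma_i$, and compares the summed weighted modulus (H\"older plus the annulus estimate, giving $M_{K_f}(\Gamma)\lesssim R_{m_i}^{-\frac{2}{p}-\frac{\dim(A)(p-1)}{p}+\epsilon}$) with the summed image modulus (the rectangle estimate from Theorem \ref{PisteKontraktio} plus \eqref{MFSyht}, giving $M(f(\Gamma))\gtrsim R_{m_i}^{-\frac{2-s+\epsilon}{p}-\dim(A)+\epsilon}$); letting $R_{m_i}\to 0$ then forces $\dim(A)\leq s$. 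You instead localize completely: rearranging exactly the same two estimates, namely $c\,r^{2K}/d\leq M(f(\Gamma))\leq M_{K_f}(\Gamma)\lesssim\bigl(\int_{B(z,2r/3)}K_f^p\,dm\bigr)^{1/p}r^{-2/p}$ with $d\leq r^{2K+\frac{2-s+\epsilon}{p}}$, gives your ``engine'' $\int_{B(z,2r/3)}K_f^p\,dm\gtrsim r^{s-\epsilon}$, and you then charge a Vitali $5r$-cover against the finite budget $\int K_f^p\,dm$. Your route buys two things: it yields the stronger conclusion $\mathcal{H}^s(A)=0$ rather than just $\dim(A)\leq s$, and it replaces the external counting lemma of \cite{H4} by the standard covering lemma; the paper's route, in exchange, only ever needs disjointness at one scale, so it never has to control overlaps of dilated balls. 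Your preliminary reduction to $A\subset\partial D$ (and triviality for $s\geq 1$) matches the remark the paper makes right after the theorem statement.

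The one genuine flaw in your write-up is the overlap claim in the covering step: it is \emph{not} true in general that if the balls $B(z_i,r_i)$ are pairwise disjoint then the doubled balls $B(z_i,2r_i)$ have bounded overlap. For a counterexample, take $r_k=4^{-k}$ and centers $z_k$ on a fixed ray with $|z_k|=1.99\,r_k$; the balls $B(z_k,r_k)$ occupy the disjoint slabs $0.99\,r_k\leq\Re z\leq 2.99\,r_k$, yet every doubled ball $B(z_k,2r_k)$ contains the origin, so the overlap is infinite. As stated, your charging inequality $\sum_i\int_{\Theta_i}K_f^p\,dm\lesssim\int_{\mathbb{C}}K_f^p\,dm$ therefore does not follow. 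The repair is immediate, however: in the engine the H\"older step integrates $K_f^p$ only over the support of the admissible function for the annulus family $\tilde\Gamma$, i.e.\ over $B(z,2r/3)$, so you may take $\Theta(z,r)\subset B(z,r)$; then the regions $\Theta(z_i,r_i)$ are genuinely disjoint because the $B(z_i,r_i)$ are, they all lie in $B(0,2)$, and $\sum_i\int_{\Theta_i}K_f^p\,dm\leq\int_{B(0,2)}K_f^p\,dm<\infty$. With that adjustment the rest of your argument goes through and proves the theorem.
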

Note that for any point $z\in D$ the local behaviour, that is, when $|\lambda_n| \to 0$, will be that of a $K$-quasiconformal mapping and hence the  lower bound for contraction  will get the form $c|\lambda_n|^K$. Hence Theorem \ref{MFSPEKTRI} is interesting when $0<s\leq 1$ and $A \subset \partial D$.

\section{Preliminaries}

Let $\Omega, \Omega' \subset \C$ be domains, and $f: \Omega \to \Omega'$ be an orientation-preserving homeomorphism. We say that $f$ is $K$-quasiconformal if $f \in W^{1,2}_{loc}(\Omega)$ and for almost every $z \in \Omega$ we have
\[
|Df(z)|^2 \leq K J_f(z).
\]
Here $|Df(z)| = \sup \{ |Df(z)v| : v \in \C, |v|=1\}$ and $J_f(z)$ is the Jacobian of $f$ at $z$.
More generally, $f$ is a mapping of finite distortion if

1) $f \in W^{1,1}_{loc}(\Omega)$

2) $Jf \in L^1_{loc}(\Omega)$

3) There is a measurable function $K_f$, finite almost everywhere, with $K_f(z) \geq 1$ such that $|Df(z)|^2 \leq K_f(z) J_f(z)$ for every $z \in \Omega$.

\medskip

\noindent From now on we denote the smallest such mapping by $K_f$ and call it the distortion.

\bigskip

\noindent However, in full generality the class of mappings of finite distortion has too wild behaviour for our purposes. They can, for example, map the whole real axis to a point or create cavities, see \cite{AIM} pages 528 and 530, which makes it impossible to have a meaningful theory about local stretching and contraction properties. Thus we restrict ourselves to study homeomorphic mappings of finite distortion, and furthermore assume that the distortion is $p$-integrable, where $p \geq 1$. That is, we assume 
\[
K_f \in L^p_{loc}(\Omega).
\]
Controlling the distortion is the classical way to restrict the class of mappings of finite distortion to a family for which a rich theory can be constructed. For us the crucial ingredient is that under this assumption we can prove the modulus inequality \eqref{ModInEq}.

\bigskip

\noindent A key component in our proofs is the modulus of path families, so we briefly recall the concept here. For a more detailed look at the topic, we refer to \cite{V}. Let $\Gamma$ be a collection of locally rectifiable paths. 
A non-negative measurable function $\rho: \C \to \R$ is said to be admissible with respect to the path family $\Gamma$ if
\[
\int_\gamma \rho(z) |dz| \geq 1
\]
for all paths $\gamma \in \Gamma$. The modulus of the path family $\Gamma$ is
\[
M(\Gamma) := \inf_\rho \int_\C \rho(z)^2 dA(z),
\]
where the infimum is taken over all admissible $\rho$. We define also weighted modulus for any weight $w: \C \to [0,\infty)$ (assumed to be measurable and locally integrable) by
\[
M_w(\Gamma) := \inf_\rho \int_\C \rho(z)^2 w(z) dA(z),
\]
where the infimum is again taken over all admissible $\rho$.
The important modulus inequality in our setting is the following. Let $f$ be a mapping of finite distortion, and suppose that the distortion $K_f$ is locally integrable. Then
\begin{equation}\label{ModInEq}
M(f(\Gamma)) \leq M_{K_f}(\Gamma).
\end{equation}

We say that a \textit{domain $\Omega$ is weak starlike} if for any two points $z,x \in \partial \Omega$ we can find a point $w\in \Omega$, which can depend on the choice of $z,x$, such that   $[w,z] \subset \overline{\Omega}$ and $[w,x] \subset \overline{\Omega}$. If there exists a single point $\omega \in \Omega$ which satisfies this condition for an arbitrarily chosen boundary points $z,x \in \partial \Omega$ then we say that the domain  $\Omega$ is starlike. Note that all the classical cusp domains, see, for example, \cite{GKT, KT,  Xu}, are starlike domains.

\bigskip

\noindent We will also briefly recall what Hausdorff dimension means. The $s$-dimensional Hausdorff measure is defined as
\[
\mathcal{H}^s(A) := \lim_{\delta \to 0+} \inf_{\mathrm{diam}(A_i) < \delta, A \subset \bigcup_i A_i} \sum_{i} \mathrm{diam}(A_i)^s.
\]
The set $A$ has the Hausdorff dimension $s$ if $\mathcal{H}^{s_1}(A) = \infty$ for any $s_1 < s$ and $\mathcal{H}^{s_2}(A) = 0$ for any $s_2 > s$.

\medskip

\noindent We need the following property of the Hausdorff dimension. Let $A$ be a set of Hausdorff dimension $s$. There is a constant $\delta_0 > 0$ with the following property. For any $s' < s$ and $0 < \delta < \delta_0$ we can find at least $\delta^{-s'}$ disjoint balls $B(z,\delta)$ with $z \in A$.


\section{Proof of Theorem \ref{PisteKontraktio}}
In this section we prove the pointwise contraction bound \eqref{Kontraktio} and verify that it is sharp.

\bigskip

\noindent Let $f: \mathbb{C} \to \mathbb{C}$ be a homeomorphism with $p$ integrable distortion that is $K$-quasiconformal in the unit disk. Assume first that both $z,x \in \partial D$ and, without loss of generality, that $x=1$ and set $|z-1|=r$. Denote $E=B\left(1, \frac{r}{3} \right) \cap \overline{D}$ and $F=B\left(z, \frac{r}{3} \right) \cap \overline{D}$ and assume 
\begin{equation}\label{KUVA1}
|f(z)-f(1) | \leq c|z-1|^a
\end{equation}
where $a>2K$. Our aim is to find an upper bound for $a$. 

\medskip

\noindent To this end we use the modulus inequality \eqref{ModInEq} with the path family $\Gamma$ connecting sets $E$ and $F$, which forces us to find good estimates  for the moduli $M_{K_f}(\Gamma)$ and $M(f(\Gamma))$.  

\medskip

\noindent For the modulus on the domain side let us denote by $g$ the planar $L$-bi-lipschitz map which maps the domain $\Omega$ to the weak star-like domain $\Omega_L$. Furthermore, let $w\in \Omega_L$ be such that the line segments $[w,g(f(1))]$ and $[w,g(f(z))]$ are contained in the closure $\overline{\Omega}_L$. The point $w$ exists due to the assumption that $\Omega_L$ is weak star. Let us then define $\bar{E}$ as the continuum of $$\bar{E}= [w,g(f(1))] \cap g(f(E)) $$  which contains the point $g(f(1))$. We define $\bar{F}$ in similar manner as the continuum of $$ \bar{F}=[w,g(f(z))] \cap g(f(F)) $$ containing the point $g(f(z))$. 

\medskip

\noindent Note that both $\bar{E}$ and $\bar{F}$ are line segments whose endpoints are mapped to the distance $\frac{r}{3}$ from the points $1$ and $z$ respectively under the mapping  $f^{-1} \circ g^{-1}.$

\medskip

\noindent Since $f$ restricted to the unit disk is a $K$-quasiconformal map we have a Beurling type estimate, see, for example, \cite{GKT} Lemma 3.6, 
\begin{equation*}
|a-b|\leq c \cdot \diam(A)^{\frac{1}{2K}},
\end{equation*}
where $A\subset f(\overline{D})$ is an arbitrary continuum with endpoints $f(a),f(b)$ and $a,b \in \overline{D}$.  This, together with the fact that mapping $g$ is  $L$-bi-lipschitz, ensures that the line segments $\bar{E}$ and $\bar{F}$ satisfy 
\begin{equation}\label{Pituus}
\diam\{ \bar{E} \}, \diam\{ \bar{F} \} \geq c r^{2K},
\end{equation}
where the constant $c$ depends on $L,f$. 

\medskip

\noindent Since mapping $g$ is a planar $L$-bi-lipschitz map it is also a $K_L$-quasiconformal. Thus, up to a constant which depends on $K_L$, we can estimate the modulus $M(f(\Gamma))$ with the modulus $M(g \circ f (\Gamma))$. Moreover, we can   estimate the modulus from below  by $M(\bar{\Gamma})$, where $\bar{\Gamma}$ is the family containing all paths connecting $\Bar{E}$ and $\Bar{F}$, due to the fact that $\bar{E} \subset g \circ f (E)$ and $\bar{F } \subset g \circ f (F)$. 

\medskip

\noindent Geometrically the line segments $\bar{E}$ and $\bar{F}$ are lying on a cone-like structure with the sides $[w,g(f(1))]$ and $[w, g(f(z))]$. Furthermore, since we assumed \eqref{KUVA1} and $g$ is $L$-bi-lipschitz, the width $|g(f(1))-g(f(z))| \approx |f(1)-f(z)|$ is much smaller than the length \eqref{Pituus} of $\bar{E}$ and $\bar{F}$. Thus we can estimate the modulus $M(\bar{\Gamma})$ from below by the modulus between top and bottom part of a rectangle with length $c_{L,f} \min \{\bar{E}, \bar{F} \}$ and height $c_{L,f}|f(z)-f(1)|$, which gives
\begin{equation}\label{KUVAMODULI}
M(f(\Gamma)) \geq c_L M(\bar{\Gamma}) \geq c_{L,f} \frac{r^{2K}}{|f(z)-f(1)|}.
\end{equation}
For the weighted modulus $M_{K_f}(\Gamma)$ we will estimate it from above using the path family $\tilde{\Gamma}$ which connects the set $E$ to the complement of the ball $B\left( 1, \frac{2r}{3} \right)$.  To estimate this further we use Hölder 
\begin{equation}\label{Moduli2}
\begin{split}
M_{K_f}(\Gamma) \leq M_{K_f}(\tilde{\Gamma}) & \leq \int_{B\left( 1, \frac{2r}{3} \right)} K_{f}(z) \rho_{0}^{2}(z) \; dA(z) \\&
\leq \left( \int_{B\left( 1, \frac{2r}{3} \right)} K_{f}^{p}(z) \; dA(z) \right)^{\frac{1}{p}}\left( \int_{\mathbb{C}} \rho_{0}^{\frac{2p}{p-1}} \; dA(z)  \right)^{\frac{p-1}{p}} \\ &
\leq c_{f,p,r} \left( \int_{\mathbb{C}} \rho_{0}^{\frac{2p}{p-1}} \; dA(z)  \right)^{\frac{p-1}{p}},
\end{split}
\end{equation}
where $c_{f,p,r} \to 0$ when $r\to 0$ as the distortion is $p$-integrable. Note that the integral part in the last line is  just the classical $\frac{2p}{p-1}$ modulus for the path family $\tilde{\Gamma}$ connecting boundaries of an annuli. Thus we can use the classical estimate, see, for example \cite{MRSY} remark 2.5, and get
\begin{equation*}
M_{K_f} \leq  c_{f,p,r}\left( \frac{1}{r} \right)^{\frac{2}{p}}.
\end{equation*}
Using the modulus inequality \eqref{ModInEq} together with the estimates \eqref{KUVAMODULI} and \eqref{Moduli2} for the moduli we obtain 
\begin{equation*}
c_{L,f} \frac{r^{2K}}{|f(z)-f(1)|} \leq c_{f,p,r}\left( \frac{1}{r} \right)^{\frac{2}{p}},
\end{equation*}
which gives the desired bound. Finally we note that the same proof holds when one or both of the points $z,x$ lie on the interior of the unit disk. 

\subsection{Sharpness of Theorem \ref{PisteKontraktio}}

Theorem \ref{PisteKontraktio} is sharp. We show this using the cardioid-type domains from \cite{Xu}.

\begin{proposition}
Let $K \geq 1$ and $p \geq 1$. For any $\varepsilon > 0$ there exists a starlike domain $\Omega$ and a mapping of $p$-integrable distortion $f: \C \to \C$ that is $K$-quasiconformal inside the unit disk such that for all $0<r<1$ we have
\[
\min_{z,x \in \overline{D}, |z-x|=r} |f(z)-f(x)| \leq Cr^{2\left(K+\frac{1}{p} - \varepsilon\right)}.
\]
\end{proposition}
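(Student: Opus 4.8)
The plan is to realize the two compression mechanisms that the lower‑bound proof already isolates — one responsible for the exponent $2K$ and one for $2/p$ — and make both of them simultaneously extremal. I would begin with the conformal case $K=1$. Take Xu's conformal map $\phi$ from $\D$ onto a cardioid‑type domain $\Omega$ carrying a single inward cusp, parametrised by the cusp sharpness, together with its homeomorphic extension to $\C$. Such cusp domains are starlike, so the hypotheses of Theorem \ref{PisteKontraktio} hold with $L=1$. The first task is to record, from the known behaviour of these maps, the precise relation between the cusp order and the integrability exponent of the distortion of the extension, and to read off the boundary compression: if $z,x\in\partial\D$ straddle the preimage of the cusp tip with $|z-x|=r$, then $|\phi(z)-\phi(x)|$ is comparable to a fixed power of $r$ determined by the cusp order. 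Choosing the cusp as sharp as $p$‑integrability of the extension permits, with a margin absorbed into $\varepsilon$, then yields $|\phi(z)-\phi(x)|\le C\,r^{2(1+1/p-\varepsilon)}$, which settles $K=1$.

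For general $K$ I would compose $\phi$ with the classical $K$‑quasiconformal radial stretching map $R$ (the map $z\mapsto z|z|^{c}$, with $c$ fixed so that $R$ is exactly $K$‑quasiconformal), placed so as to act in the radial direction at the cusp. Its role is to upgrade the \emph{length} scale of the extremal continua: in the notation of the lower‑bound proof the segments $\bar E,\bar F$ have length $\approx r^{2K}$ and are separated by $\approx|f(z)-f(x)|$, while the weighted modulus supplies the balancing ratio $r^{-2/p}$; the conformal map already produces the separation‑to‑length ratio $r^{2/p}$ coming from $p$‑integrability, and the radial stretch is precisely what turns the conformal penetration depth $r^{2}$ into the quasiconformal depth $r^{2K}$. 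One must then verify three points: that $f$ is $K$‑quasiconformal inside $\D$ (immediate, since $\phi$ is conformal there and $R$ is $K$‑quasiconformal); that the image domain remains of the required starlike type; and that the global map retains $p$‑integrable distortion. The last follows from the subadditivity of the distortion under composition, $K_f\le K\cdot(K_{\phi}\circ R)$, together with a change of variables using that $R$ is a global quasiconformal map of bounded distortion, so that $\int K_f^{p}$ stays finite. Collecting the exponents gives $C\,r^{2(K+1/p-\varepsilon)}$ after readjusting the cusp margin.

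The main obstacle is the correct bookkeeping of how the two mechanisms combine, and in particular ensuring that the exponents \emph{add} to $2K+2/p$ rather than multiply to $2K+2K/p$; the latter would violate Theorem \ref{PisteKontraktio}, which is exactly the warning that a careless composition must destroy $p$‑integrability. Concretely I have to pin down the right centre and orientation of the radial stretch relative to the cusp, distinguish radial from tangential compression inside the thin cusp, and confirm that the penetration depth of $\bar E,\bar F$ scales like $r^{2K}$ while the cusp‑thinness ratio stays $r^{2/p}$ independently of $K$. A second, more technical issue is to keep the $p$‑integrability of the composite extension uniform as the cusp is sharpened toward its threshold, so that both the constant $C$ and the margin are controlled by $\varepsilon$ alone. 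Once the geometry of the composite near the cusp tip is fixed, the modulus‑of‑continuity estimate itself reduces to the explicit cusp computation already available for Xu's maps.
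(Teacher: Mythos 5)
Your construction is essentially the paper's: Xu's cusp map $h_s$ with cusp order $s=1+\frac{1}{p}-\varepsilon$ (so that the extension has $p$-integrable distortion, since $p<\frac{1}{s-1}$), combined with the $K$-quasiconformal radial stretching, and the compression read off at boundary points symmetric about the preimage of the cusp tip. One point must be corrected, because your write-up is internally inconsistent about the order of composition, and only one order works. Your distortion estimate $K_f\le K\cdot(K_{\phi}\circ R)$ is the chain-rule bound for $f=\phi\circ R$ (stretch first, acting on the disk side); but since $R$ fixes the unit circle pointwise, $\phi\circ R$ agrees with $\phi$ on $\partial\mathbb{D}$, so the cusp compression stays at $r^{2(1+1/p-\varepsilon)}$ and the factor $r^{2K}$ is never produced — that construction fails for $K>1$. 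The order your geometric description actually demands, and the one the paper uses, is stretch last on the image side: $f=S_K\circ h_s$ with $S_K(w)=w|w|^{K-1}$ centered at the cusp tip $h_s(-1)=0$; there the distortion bound is the simpler $K_f\le (K_{S_K}\circ h_s)\,K_{h_s}\le K\,K_{h_s}$, and no change of variables is needed at all. With the order fixed, the ``bookkeeping obstacle'' you worry about resolves exactly as you predict, in two lines: $S_K$ preserves arguments, so using the reflection symmetry $h_s(\bar{z})=\overline{h_s(z)}$ and similar triangles one gets $|f(x)-f(\bar{x})|=2\Im f(x)=2\,|\Re f(x)|\,|\Re h_s(x)|^{s-1}\le 2|h_s(x)|^{K+s-1}\le C|x+1|^{2(K+s-1)}$; the exponents add rather than multiply because the stretch deepens the radial scale ($r^{2}\mapsto r^{2K}$) while leaving the angular thinness of the cusp ($\approx r^{2(s-1)}$) untouched.
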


\begin{proof}
Let $1 < s \leq 2$ such that $s=1+\frac{1}{p} - \varepsilon$. Define $\Delta_s$ to be the domain bounded by the curve
\[
\{(x,y) : -1 \leq x \leq 0, y = (-x)^s\},
\]
its reflection with respect to the real axis, and the unique image of a circle arc under the power mapping $z \mapsto z^2$ such that the boundary is continuously differentiable at $-1+i$ and $-1-i$. The shape of the domain $\Delta_s$ is illustrated by the Figure \ref{CuspDomain}. For a more rigorous definition of this domain, see \cite{Xu}.

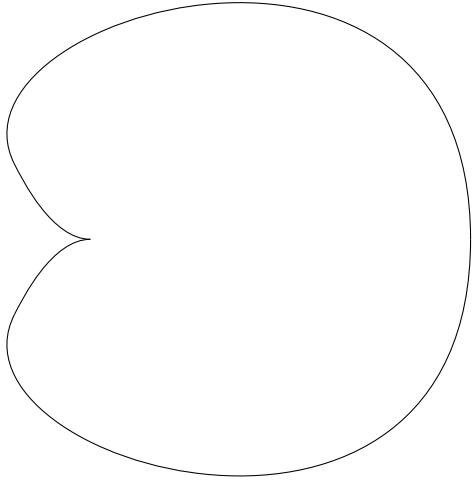
\begin{figure}
\begin{tikzpicture}
\draw (0,0) parabola (-1,1);
\draw (0,0) parabola (-1,-1);
\draw (-1,1) .. controls (-2,3) and (5,5) .. (5,0);
\draw (-1,-1) .. controls (-2,-3) and (5,-5) .. (5,0);
\end{tikzpicture}
\caption{Domain $\Delta_s$}.
\label{CuspDomain}
\end{figure}

Let $M_s$ be the image of $\Delta_s$ under the complex square root with the branch chosen to have $1^{1/2} = 1$, and take a conformal mapping $g_s: D \to M_s$ with $g_s(-1)=0$. It is shown in \cite{Xu} that the boundary of $M_s$ is Dini-smooth, and therefore $g_s$ extends to a bi-Lipschitz mapping from $\overline{D}$ to $\overline{M_s}$. This means that the conformal mapping $h_s: D \to \Delta_s$, $h_s(z) = g_s(z)^2$ satisfies $|h_s(x)| \leq C|x+1|^2$. Furthermore, it is shown on \cite{Xu} page 6 that the mapping $h_s$ satisfies 
\begin{equation}\label{Peilaus}
h_s(\bar{z})= \overline{h_s(z)}
\end{equation}
 for all $z\in D$.

Letting $x$ be a point on the unit circle close to $-1$ in the upper half-plane, we see that $$|h_s(x)-h_s(\overline{x})| = 2 \Im h_s(x) = 2 |\Re h_s(x)|^s \leq 2 |h_s(x)|^s \leq 2C|x+1|^{2s}.$$ By theorem 1.2 in \cite{Xu}, $h_s$ can be extended to a mapping of $p$-integrable distortion since $p < \frac{1}{s-1}$. Note that this family of mappings would be enough to show sharpness for Theorem \ref{PisteKontraktio} if we assume that $f$ is conformal inside the unit disk.

For the general case let $S_K: \C \to \C$ be the standard radial stretching:
\[
S_K(z) = \frac{z}{|z|}|z|^K.
\]
Take $\Omega = S_K(\Delta_s)$ and $f = S_K \circ h_s$. Note that $f$ is a homeomorphism since it is a composition of two homeomorphisms and that the condition \eqref{Peilaus} is satisfied for $S_k$, and thus also for $f$. Then for $x$ on the unit circle near $-1$ we have
\begin{equation*}
\begin{split}
|f(x)-f(\overline{x})| &= 2 \Im f(x) = 2 |\Re f(x)||\Re h_s(x)|^{s-1} \leq 2 |f(x)||h_s(x)|^{s-1}\\
& = 2 |h_s(x)|^{K+s-1} \leq 2C|x+1|^{2(K+s-1)},
\end{split}
\end{equation*} 
where the second equality comes from the similarity of the triangle with vertices at the origin, $\Re h_s(x)$ and  $\Im h_s(x)$ with the triangle with vertices at the origin, $\Re f(x)$ and $\Im f(x)$.
As $s=1+\frac{1}{p}-\varepsilon$ and $|x+1| \leq |x-\overline{x}|$ for $x$ on the unit circle close to $-1$, this finishes the proof.
\end{proof}

\section{Proof of Theorem \ref{MFSPEKTRI}}
Let us then expand the ideas from the pointwise case to study local compression in larger sets.  Assume that $f$ and set $A$ satisfy the assumptions of Theorem \ref{MFSPEKTRI}. Thus $A \subset D$ and for any point $z\in A$ we have a sequence $\lambda_n$ such that $|\lambda| \to 0$ and $z+\lambda_{z,n} \in \overline{D}$, which satisfies 
\begin{equation*}
|f(z)-f(z+\lambda_n)|\leq |\lambda_n|^{2K+ \frac{2-s+\epsilon}{p}}.
\end{equation*}
 Using Lemma 3.4 from \cite{H4} we note that there must be many radii $\lambda_{z,n_z}$ such that $$|\lambda_{z,n_n}| \in \left( \frac{1}{2^{m_i}}, \frac{1}{2^{m_i-1}} \right]=L_{m_i}$$ for some growing sequence $m_i$. To be more precise, there exists a sequence $m_i$ for which we can find of order $2^{m_i(\dim(A)-\epsilon)}$, where $\epsilon>0$ can be made arbitrary small, disjoint balls $B(z,5|\lambda_{z,n_z}|)$ such that $z\in A$ and $|\lambda_{z,n_z}| \in L_{m_i}$. 

\medskip

\noindent Our aim is to again use the modulus inequality \eqref{ModInEq} to obtain the bound for the dimension of the set $A$. Hence we need to find some suitable path family to study. To this end, fix $m_i$ for which we have many radii $\lambda_{z,z_n} \in L_{m_i}$ and define $$E= \bigcup_{|\lambda_{z,n_z}| \in L_{m_i}} B\left(z, \frac{|\lambda_{z,n_z}|}{3} \right)$$
where the points $z\in A$ are the centerpoints associated with the radius $\lambda_{z,r_n}$. Then define $$F=  \bigcup_{|\lambda_{z,n_z}| \in L_{m_i}} B\left(z +\lambda_{z,n_z}, \frac{|\lambda_{z,n_z}|}{3} \right) \cup \partial B\left(z,5|\lambda_{z,n_z}| \right).$$
Finally, define $\Gamma $ to be the path family connecting the sets $E$ and $F$. Due to the construction of these families, and the fact that the balls $B(z,5|\lambda_{z,n_z}|)$ are disjoint, this path family can be thought as the union of disjoint path families $\Gamma_i$ which connects one of the pairs $E_i$ and $F_i$ to each other. 

\medskip

\noindent To estimate the weighted modulus $M_{K_f}(\Gamma)$ we let $\rho_0$ be an arbitrary admissible function and use the Hölder inequality 
\begin{equation*}
\begin{split}
M_{K_f}(\Gamma)  & \leq \int_{B\left( 0, 2 \right)} K_{f}(z) \rho_{0}^{2}(z) \; dA(z) \\ &
\leq \left( \int_{B\left( 0, 2 \right)} K_{f}^{p}(z) \; dA(z) \right)^{\frac{1}{p}}\left( \int_{\mathbb{C}} \rho_{0}^{\frac{2p}{p-1}} \; dA(z)  \right)^{\frac{p-1}{p}} \\ &
\leq c_{f,p,r} \left( \int_{\mathbb{C}} \rho_{0}^{\frac{2p}{p-1}} \; dA(z)  \right)^{\frac{p-1}{p}}.
\end{split}
\end{equation*}
Here the last integral can be understood as the sum over the $\frac{2p}{p-1}$ moduli of the path families $\Gamma_i$, since function $\rho_0$ was an arbitrary admissible function. Let us denote for simplicity $R_{m_i}=\frac{1}{2^{m_i}}$, remember that $|\lambda_{z,n_z}| \in (R_{m_i}, 2R_{m_i}]$, and estimate each of the path families $\Gamma_i$ from above with the annuli $$B\left(z, \frac{2 |\lambda_{z,n_z}|}{3} \right) \setminus B\left(z, \frac{|\lambda_{z,n_z}|}{3} \right)$$
to obtain 
\begin{equation}\label{MODULI1MFSP}
\begin{split}
M_{K_f}(\Gamma)  & \leq c_{f,p,r} \left( \int_{\mathbb{C}} \rho_{0}^{\frac{2p}{p-1}} \; dA(z)  \right)^{\frac{p-1}{p}} \leq c_{f,p,r} \left( \frac{1}{R_{m_i}^{\frac{2}{p-1}+\dim(A)-\epsilon}}  \right)^{\frac{p-1}{p}} \\
& \leq \frac{c_{f,p,r}}{R_{m_i}^{\frac{2}{p}+\frac{\dim(A) (p-1)}{p}-\epsilon}}.
\end{split}
\end{equation}
For the modulus $M(f(\Gamma))$ on the image side we first divide it to a sum of the moduli $M(f(\Gamma_i))$ of which we can each one estimate from below using the exact same method as in the pointwise case. Hence, for each $F_i$ we obtain 
\begin{equation*}
M(f(\Gamma_i)) \geq C_{L,f} \frac{|\lambda_{z,n_z}|^{2K}}{|\lambda_{z,n_z}|^{2K + \frac{2-s+\epsilon}{p}}} \geq  \frac{C_{L,f}}{R_{m_i}^{\frac{2-s+\epsilon}{p}}},
\end{equation*}
 where we have used the assumption \eqref{MFSyht}. Thus, when summing over $i$, we obtain 
\begin{equation}\label{MODULI"MSFP}
M(f(\Gamma)) \geq c_{L,f,K,p} \frac{1}{R_{m_1}^{\frac{2-s+\epsilon}{p}+ \dim(A)-\epsilon}}.
\end{equation}
The estimates \eqref{MODULI1MFSP} and \eqref{MODULI"MSFP} for moduli together with the modulus inequality \eqref{ModInEq} give the desired estimate when $R_{m_i} \to 0$ and finish the proof of Theorem \eqref{MFSPEKTRI}.

\end{document}